\newtheorem{thm}{Theorem}[section]
\newtheorem{cor}[thm]{Corollary}
\theoremstyle{definition}
\theoremstyle{remark}
\numberwithin{equation}{section}
\begin{document}

\title[Hardy-Littlewood Maximal Operator And $BLO^{1/\log}$ Class of Exponents]{Hardy-Littlewood Maximal Operator And $BLO^{1/\log}$ Class of Exponents}%
\author{Tengiz Kopaliani}%
\author{Shalva Zviadadze}
\address[Tengiz Kopaliani]{Faculty of Exact and Natural Sciences, Javakhishvili Tbilisi State University, 13, University St., Tbilisi, 0143, Georgia}%
\address[Shalva Zviadadze]{Faculty of Exact and Natural Sciences, Javakhishvili Tbilisi State University, 13, University St., Tbilisi, 0143, Georgia}%
\email[Tengiz Kopaliani]{tengizkopaliani@gmail.com}%
\email[Shalva Zviadadze]{sh.zviadadze@gmail.com}

\thanks{The research of the first author is supported by Shota Rustaveli National Science Foundation grants no. 31/48 (Operators in some function spaces and their applications in Fourier Analysis) and  no. DI/9/5-100/13 (Function spaces, weighted inequalities for integral operators and problems of summability of Fourier series). Research of the second author is supported by Shota Rustaveli National Science Foundation grant \#52/36.}
\subjclass[2010]{42B25, 42B35}

\keywords{variable exponent Lebesgue space, Hardy-Littlewood maximal operator}%

%\date{}%
%\dedicatory{}%
%\commby{}%
% ----------------------------------------------------------------
\begin{abstract}
It is well known that if Hardy-Littlewood maximal operator is
bounded in space $L^{p(\cdot)}[0;1]$ then $1/p(\cdot)\in
BMO^{1/\log}$. On the other hand if $p(\cdot)\in BMO^{1/\log},$
($1<p_{-}\leq p_{+}<\infty$), then there exists $c>0$ such that
Hardy-Littlewood maximal operator is bounded in
$L^{p(\cdot)+c}[0;1].$ Also There  exists  exponent $p(\cdot)\in
BMO^{1/\log},$  ($1<p_{-}\leq p_{+}<\infty$) such that
Hardy-Littlewood maximal operator is not bounded in
$L^{p(\cdot)}[0;1]$.
\par In the present paper we construct exponent $p(\cdot),$ $(1<p_{-}\leq
p_{+}<\infty)$, $1/p(\cdot)\in BLO^{1/\log}$ such that Hardy-Littlewood maximal operator is not bounded in
$L^{p(\cdot)}[0;1]$.

\end{abstract}
\maketitle
% ----------------------------------------------------------------
\section{Introduction}

\par The variable exponent Lebesgue spaces $L^{p(\cdot)}(\mathbb{R}^{n})$
and the corresponding variable exponent Sobolev spaces
$W^{k,p(\cdot)}$ are of interest for their applications to the
problems in  fluid dynamics,  partial differential equations with
non-standard growth conditions, calculus of variations,  image
processing and etc (see \cite{DHHR}).

\par Given a measurable function
$p:[0;1]\rightarrow[1;+\infty),\,\,L^{p(\cdot)}[0;1]$ denotes
the set of measurable functions $f$ on $[0;1]$ such that for some
$\lambda>0$
$$
\int_{[0;1]}\left(\frac{|f(x)|}{\lambda}\right)^{p(x)}dx<\infty.
$$
This set becomes a Banach function spaces when equipped with the
norm
$$
\|f\|_{p(\cdot)}=\inf\left\{\lambda>0:\,\,
\int_{[0;1]}\left(\frac{|f(x)|}{\lambda}\right)^{p(x)}dx\leq1\right\}.
$$
\par For the given $p(\cdot),$ the conjugate exponent $p'(\cdot)$ is defined pointwise $p'(x)=p(x)/
(p(x)-1)$, $x\in[0;1].$ Given a set $Q\subset[0;1]$ we define some standard notations:
$$
p_-(Q):=\mathop{\mbox{essinf}}\limits_{x\in Q}p(x),\:\:\:\:
p_+(Q):=\mathop{\mbox{esssup}}\limits_{x\in Q}
p(x),\:\:\:\:p_-:=p_-([0;1]),\:\:\:\: p_+:=p_+([0;1]).
$$
\par Recall that the Hardy-Littlewood maximal operator is defined for
any $f\in L^{1}[0;1]$ by
$$
Mf(x)=\sup\limits_{x\in Q}\frac{1}{|Q|}\int_{Q}|f(t)|dt,
$$
where supremum is taken over all $Q\subset [0;1]$ intervals
containing point $x$ (assume  that sets like $[0;a)$ and $(a;1]$ are also intervals).  Denote  by $\mathcal{B}$ the class of all measurable exponents $p(\cdot),\,1< p_{-}\leq p_{+}<\infty$ for which the Hardy-Littlewood maximal operator is bounded on the space $L^{p(\cdot)}[0;1]$.
Different aspects concerning this class can be found in monographs \cite{CUF} and \cite{DHHR}.
\par Assume that $1<p_{-}\leq p_{+}<\infty$. The most important  condition, one widely used
in the study of variable Lebesgue spaces, is log-L\"{o}lder continuity. Let $C^{1/\log}$ denotes the set of exponents $p:[0;1]\rightarrow[1,+\infty)$ with log-H\"{o}lder condition 
$$
|(p(x)-p(y))\ln|x-y||\leq C,\,\, x,\,y\in[0;1],\,x\neq y.
$$

\par Diening \cite{DI} proved  a key consequence of log-H\"{o}lder continuity of $p(\cdot)$: if $1<p_-$ and
$p(\cdot)\in C^{1/\log},$ then $p(\cdot)\in \mathcal{B}.$ However, in some sense log-H\"{o}lder continuity is optimal. If the local log-H¨older continuity is replaced by a weaker uniform modulus
of continuity then this new condition is not sufficient in sense that there exists a variable exponent with this modulus of continuity such that $M$ is not bounded on $L^{p(\cdot)}[0;1]$ (see
\cite{CUF}, \cite{DHHR}).
\par The following characterization of exponents in $\mathcal{B}$ was
given in \cite{K2}.
\begin{thm}
\label{theorem_chi_asymp}
 $p(\cdot)\in\mathcal{B}$ if and only if
\begin{equation}
\label{uniform}
 \|\chi_{Q}\|_{p(\cdot)}\asymp
|Q|^{\frac{1}{|Q|}\int_{Q}\frac{1}{p(x)}dx} \,\,\,\,\mbox{and}
 \,\,\,\|\chi_{Q}\|_{p'(\cdot)}\asymp
|Q|^{\frac{1}{|Q|}\int_{Q}\frac{1}{p'(x)}dx}
\end{equation}
uniformly for all intervals $Q\subset[0;1].$
\end{thm}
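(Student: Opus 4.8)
The plan is to prove both implications through three soft tools: the norm--modular identity $\int_Q \|\chi_Q\|_{p(\cdot)}^{-p(t)}\,dt=1$ (valid since $p_+<\infty$), the generalized H\"older inequality $\int_Q|fg|\le C_0\|f\|_{p(\cdot)}\|g\|_{p'(\cdot)}$, and the averaging operators $A_Qf=\frac{\chi_Q}{|Q|}\int_Q f$. For an interval $Q$ set $\langle p\rangle_Q=\frac1{|Q|}\int_Q p(t)\,dt$, $\;1/p_Q=\frac1{|Q|}\int_Q \frac{dt}{p(t)}$ and $1/p'_Q=\frac1{|Q|}\int_Q\frac{dt}{p'(t)}$, so $1/p_Q+1/p'_Q=1$ and \eqref{uniform} reads $\|\chi_Q\|_{p(\cdot)}\asymp|Q|^{1/p_Q}$ together with $\|\chi_Q\|_{p'(\cdot)}\asymp|Q|^{1/p'_Q}$. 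One inequality in each $\asymp$ is essentially free: applying Jensen's inequality to the convex map $u\mapsto\|\chi_Q\|_{p(\cdot)}^{-u}$ against the probability measure $dt/|Q|$ on $Q$ in the identity above gives $\|\chi_Q\|_{p(\cdot)}\ge|Q|^{1/\langle p\rangle_Q}\ge|Q|^{1/p_Q}$ (the last step by the arithmetic--harmonic mean inequality and $|Q|\le1$), and symmetrically $\|\chi_Q\|_{p'(\cdot)}\ge|Q|^{1/p'_Q}$.

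For \emph{necessity}, suppose $p(\cdot)\in\mathcal B$. Since $A_Qf(x)\le Mf(x)$ for $x\in Q$, $f\ge0$, and $A_Qf$ is supported on $Q$, we get $\|A_Q\|_{L^{p(\cdot)}\to L^{p(\cdot)}}\le\|M\|$ uniformly in $Q$. A direct computation gives $\|A_Q\|_{L^{p(\cdot)}\to L^{p(\cdot)}}=\frac{\|\chi_Q\|_{p(\cdot)}}{|Q|}\sup_{\|f\|_{p(\cdot)}\le1}\int_Q|f|\asymp\frac{\|\chi_Q\|_{p(\cdot)}\,\|\chi_Q\|_{p'(\cdot)}}{|Q|}$, since the supremum is the associate norm of $\chi_Q$, comparable to $\|\chi_Q\|_{p'(\cdot)}$. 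Hence $\|\chi_Q\|_{p(\cdot)}\,\|\chi_Q\|_{p'(\cdot)}\le C|Q|$ uniformly, and combining with the free lower bound $\|\chi_Q\|_{p'(\cdot)}\ge|Q|^{1/p'_Q}$ yields $\|\chi_Q\|_{p(\cdot)}\le C|Q|/\|\chi_Q\|_{p'(\cdot)}\le C|Q|^{1-1/p'_Q}=C|Q|^{1/p_Q}$, the missing upper bound; the estimate for $\|\chi_Q\|_{p'(\cdot)}$ follows symmetrically. This settles \eqref{uniform}.

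For \emph{sufficiency}, assume \eqref{uniform}; note it forces $\|\chi_Q\|_{p(\cdot)}\,\|\chi_Q\|_{p'(\cdot)}\asymp|Q|$. By a standard comparison with shifted dyadic grids it suffices to bound a dyadic maximal operator $M^{d}$, and by duality $\|M^{d}f\|_{p(\cdot)}\le 2\sup\{\int_0^1 M^{d}f\,|g|:\|g\|_{p'(\cdot)}\le1\}$. A Calder\'on--Zygmund stopping-time decomposition produces a sparse family $\mathcal S$ of dyadic intervals with pairwise disjoint major subsets $E_Q\subset Q$, $|E_Q|\ge\frac12|Q|$, and $M^{d}f\le C\sum_{Q\in\mathcal S}\langle f\rangle_Q\chi_Q$ with $\langle f\rangle_Q=\frac1{|Q|}\int_Q|f|$. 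The bridge to \eqref{uniform} is the pointwise bound, valid for every $Q$ by generalized H\"older and \eqref{uniform}, $\langle f\rangle_Q\le \frac{C_0}{|Q|}\|f\chi_Q\|_{p(\cdot)}\|\chi_Q\|_{p'(\cdot)}\le C\,\|f\chi_Q\|_{p(\cdot)}/\|\chi_Q\|_{p(\cdot)}$, and its analogue for $g$. Feeding these together with $|Q|\asymp\|\chi_Q\|_{p(\cdot)}\|\chi_Q\|_{p'(\cdot)}$ into the sparse estimate reduces everything to the Carleson-type inequality
\[
\sum_{Q\in\mathcal S}\|f\chi_Q\|_{p(\cdot)}\,\|g\chi_Q\|_{p'(\cdot)}\le C\,\|f\|_{p(\cdot)}\,\|g\|_{p'(\cdot)}.
\]

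The main obstacle is exactly this last summation. The naive move of bounding both averages by $M^{d}$ is circular, and the purely modular variant runs into $\sum_{Q\in\mathcal S}\|f\chi_Q\|_{p(\cdot)}^{p_-}$, which cannot be charged to the disjoint sets $E_Q$ by size alone. The resolution is a genuine Carleson embedding adapted to $L^{p(\cdot)}$: one splits $\mathcal S$ along stopping times on which $\langle f\rangle_Q$ and $\langle g\rangle_Q$ grow geometrically, and uses the power-type law \eqref{uniform} to make the local norms $\|f\chi_Q\|_{p(\cdot)}$ telescope down the stopping tree, so that each layer is controlled by the disjoint pieces $E_Q$ and hence by $\|f\|_{p(\cdot)}$ and $\|g\|_{p'(\cdot)}$. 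I expect this $A_\infty$-type stopping-time argument to be the technical heart; everything else (Jensen, the averaging-operator identity, and the H\"older bridge) is soft. It is worth stressing that \eqref{uniform} is symmetric under $p\leftrightarrow p'$, so the same argument simultaneously yields boundedness of $M$ on $L^{p'(\cdot)}[0;1]$, in accordance with the known self-duality of the class $\mathcal B$.
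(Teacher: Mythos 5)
Your necessity half is correct, and it is the standard argument: $A_Qf\le Mf$ pointwise for $f\ge 0$ gives uniform boundedness of the averaging operators, the identity $\|A_Q\|=\frac{\|\chi_Q\|_{p(\cdot)}}{|Q|}\sup_{\|f\|_{p(\cdot)}\le1}\int_Q|f|$ plus norm duality turns this into $\|\chi_Q\|_{p(\cdot)}\|\chi_Q\|_{p'(\cdot)}\le C|Q|$, and your free Jensen/arithmetic--harmonic bounds together with $1/p_Q+1/p'_Q=1$ convert that product bound into both upper estimates in \eqref{uniform}. (For the record, the paper gives no proof of Theorem \ref{theorem_chi_asymp}; it is quoted from \cite{K2}, whose proof of the hard direction goes through infimal convolution and the Banach-function-space machinery of \cite{K1}, not through sparse domination.)

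The sufficiency half, however, does not merely leave the ``technical heart'' unproven: the Carleson-type inequality you reduce to is \emph{false}, already for the constant exponent $p\equiv2$, for which \eqref{uniform} holds with equality. Take $N\in\mathbb{N}$, $f=g=2^{N/2}\chi_{[0;2^{-N}]}$, so $\|f\|_{2}=\|g\|_{2}=1$, and $\mathcal{S}=\set{[0;2^{-k}]:0\le k\le N}$. This family is sparse (take $E_{[0;2^{-k}]}=(2^{-k-1};2^{-k}]$, pairwise disjoint with $|E_Q|=\frac{1}{2}|Q|$), and it is exactly the family the Calder\'on--Zygmund selection produces for this $f$. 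Yet $\|f\chi_Q\|_{2}\,\|g\chi_Q\|_{2}=1$ for every $Q\in\mathcal{S}$, so
\[
\sum_{Q\in\mathcal{S}}\|f\chi_Q\|_{2}\,\|g\chi_Q\|_{2}=N+1,
\qquad\text{while}\qquad \|f\|_{2}\,\|g\|_{2}=1 .
\]
The damage is done at your ``H\"older bridge'': the genuine sparse sum $\sum_{Q}\langle f\rangle_Q\langle g\rangle_Q|Q|$ is at most $2$ in this example, because $\langle f\rangle_Q$ decays like $\|f\|_1/|Q|$ as $Q$ grows, whereas the normalized local norm $\|f\chi_Q\|_{p(\cdot)}/\|\chi_Q\|_{p(\cdot)}$ decays only like $|Q|^{-1/p_Q}$; substituting the latter for the former in both slots destroys the geometric convergence. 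Since the failure occurs for constant $p$, no telescoping via the power law \eqref{uniform}, stopping times, or $A_\infty$-type layering can rescue the final summation --- the reduction itself has to be abandoned, not completed. This is also visible structurally: your scheme never uses that the $Q$'s are intervals of $[0;1]$, so if it worked it would derive boundedness of $M$ from the single-cube Muckenhoupt-type condition in any dimension, which is precisely what is not known (Diening's class $\mathcal{A}$ in \cite{DHHR} requires uniform estimates over \emph{families} of cubes); the one-dimensional, bounded setting is exactly what the actual proof in \cite{K2} exploits.
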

\par Given a function $f\in L^{1}[0;1]$. Let define its $BMO$ modulus by
$$
\gamma(f,r)=\sup\limits_{|Q|\leq r}\frac{1}{|Q|}\int_{Q}|f(x)-f_{Q}|dx,\,\,0<r\leq1,
$$
where $f_Q$ denotes average of $f$ on the interval $Q$ and supremum is taken
over all intervals $Q\subset[0;1].$ We say that $f\in BMO^{1/\log}$ if $\gamma(f,r)\leq C/\log(e+1/r).$
\par The class $BMO^{1/\log}$ is very important for investigation of
exponents from $\mathcal{B}.$  Lerner \cite{Le} showed that if $a>0$ is small enough then exponent
\begin{equation}
\label{exponent_lerner_1} p(x)=2-a(1+\sin(\log\log(e+x+1/x))),
\end{equation}
belongs to class $BMO^{1/\log}\cap\mathcal{B}.$
\begin{thm} [\cite{Le}]
\label{theorem_BMO_VMO_c} Let $1<p_{-}\leq p_{+}<\infty$. If
$p(\cdot)\in BMO^{1/\log}$, then there exists $c>0$ such that
$p(\cdot)+c\in \mathcal{B}$.
\end{thm}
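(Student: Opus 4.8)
The plan is to verify, for a suitable constant $c$, the two asymptotic identities \eqref{uniform} of Theorem~\ref{theorem_chi_asymp} for the exponent $p(\cdot)+c$ and its conjugate; boundedness of $M$ on $L^{p(\cdot)+c}[0;1]$ then follows. Since both identities are instances of the same estimate applied to $r=p+c$ and to $r=(p+c)'$, I would isolate the following claim: if $1<r_-\le r_+<\infty$ and $r\in BMO^{1/\log}$, then $\|\chi_Q\|_{r(\cdot)}\asymp|Q|^{\frac1{|Q|}\int_Q\frac{dx}{r(x)}}$ uniformly in $Q$, provided the $BMO^{1/\log}$ modulus of $r$ is small enough relative to $r_-$. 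For intervals with $|Q|\ge\delta_0$ there are only boundedly many scales and the comparison is immediate, so I focus on small $|Q|$, where $\lambda=\|\chi_Q\|_{r(\cdot)}\le1$ is characterised by $\int_Q\lambda^{-r(x)}\,dx=1$. Writing $s=\frac1{|Q|}\int_Q\frac{dx}{r(x)}$, the whole claim reduces to showing $A:=\int_Q|Q|^{-s\,r(x)}\,dx\asymp1$, because $\lambda\mapsto\int_Q\lambda^{-r(x)}\,dx$ is monotone with controlled derivative (via $r_-,r_+$), so $A\asymp1$ forces $\lambda\asymp|Q|^{s}$.

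For the lower bound I would use convexity: by Jensen applied to $t\mapsto|Q|^{t}$ and the arithmetic--harmonic mean inequality $\big(\frac1{|Q|}\int_Q r\big)\cdot s\ge1$, one gets $A\ge|Q|^{\,1-s\,\frac1{|Q|}\int_Q r}\ge1$. The upper bound is the substantive half. Setting $\bar r=\frac1{|Q|}\int_Q r(x)\,dx$ and $\beta=s\log(1/|Q|)>0$, I factor $A=|Q|^{\,1-s\bar r}\cdot\frac1{|Q|}\int_Q e^{\beta\,(r(x)-\bar r)}\,dx$. The prefactor $|Q|^{\,1-s\bar r}$ is harmless once $(s\bar r-1)\log(1/|Q|)$ is bounded, which follows from a local form of the John--Nirenberg inequality giving $\frac1{|Q|}\int_Q(r-\bar r)^2\le C\,\gamma(r,|Q|)^2$. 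The exponential average is the heart of the matter: by the same local John--Nirenberg inequality, $r-\bar r$ has exponentially decaying distribution on $Q$ with parameter $\gamma(r,|Q|)$, hence $\frac1{|Q|}\int_Q e^{\beta(r-\bar r)}\,dx\le C$ \emph{provided} $\beta\,\gamma(r,|Q|)$ stays below the John--Nirenberg integrability threshold $c_{0}$.

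The main obstacle, and the reason a shift $c$ is needed, is precisely this threshold condition $\beta\,\gamma(r,|Q|)<c_0$, uniformly in $Q$. Since $\beta=s\log(1/|Q|)\le\frac{1}{r_-}\log(1/|Q|)$ and $\gamma(r,|Q|)\le C_0/\log(e+1/|Q|)$ for $r\in BMO^{1/\log}$, the two logarithms cancel and $\beta\,\gamma(r,|Q|)\le C\,C_0/r_-$. For $r=p+c$ this is the point: adding a constant leaves the $BMO^{1/\log}$ modulus unchanged, $\gamma(p+c,\cdot)=\gamma(p,\cdot)$, while raising the lower bound to $(p+c)_-=p_-+c$, so $\beta\,\gamma\le C\,C_0/(p_-+c)<c_0$ once $c$ is large. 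For the conjugate $r=(p+c)'$ one has $\frac1{|Q|}\int_Q\frac{dx}{r}\le1$ and, since $t\mapsto t/(t-1)$ is Lipschitz on $[p_-+c,p_++c]$ with constant of order $(p_-+c-1)^{-2}$, the modulus satisfies $\gamma((p+c)',|Q|)\le C\,(p_-+c-1)^{-2}\gamma(p,|Q|)$; hence $\beta\,\gamma\le C\,C_0/(p_-+c-1)^2$, again below $c_0$ for large $c$. Choosing $c$ so that both thresholds hold simultaneously makes $A\asymp1$ for $r=p+c$ and $r=(p+c)'$ uniformly in $Q$, which is exactly the content of \eqref{uniform} for $p(\cdot)+c$. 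I expect the delicate points to be the precise form of the local John--Nirenberg inequality adapted to the $BMO^{1/\log}$ modulus, and the verification that the single constant $c$ can be made to beat the threshold $c_0$ for the exponent and its conjugate at once; everything else is convexity and bookkeeping.
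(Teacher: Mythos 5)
First, a point of orientation: the paper does not prove Theorem~\ref{theorem_BMO_VMO_c} at all --- it is imported from Lerner \cite{Le} --- so your proposal can only be compared with Lerner's original argument, not with anything in this text. Your outline is correct, and it is a genuinely different route. Lerner's 2005 proof is direct: it establishes a quantitative sufficient condition (smallness of the $BMO^{1/\log}$ seminorm of the exponent relative to a threshold that improves as $p_-$ grows) by estimating the modular of $Mf$ with John--Nirenberg/weighted-norm techniques, and then observes that adding $c$ leaves $\gamma(p,\cdot)$ unchanged while raising $p_-$. Your proof instead verifies both asymptotic estimates \eqref{uniform} for $p(\cdot)+c$ and for its conjugate, and then invokes the sufficiency half of Theorem~\ref{theorem_chi_asymp}; note that this characterization (\cite{K2}, 2007) postdates \cite{Le} and is itself the deep ingredient, so your argument is shorter only because the real analytic work is outsourced to it, and it is tied to the one-dimensional interval setting in which that theorem is stated, whereas Lerner's argument is self-contained and works in $\mathbb{R}^n$. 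The mechanics of your reduction are sound: the equivalence of $\|\chi_Q\|_{r(\cdot)}\asymp|Q|^{s}$ with $A=\int_Q|Q|^{-sr(x)}dx\asymp1$, the Jensen/arithmetic--harmonic lower bound $A\ge1$, the factorization $A=|Q|^{1-s\bar r}\cdot\frac{1}{|Q|}\int_Q e^{\beta(r-\bar r)}dx$ with $\beta=s\log(1/|Q|)$, the John--Nirenberg exponential bound under the threshold condition $\beta\,\gamma(r,|Q|)<c_0$, and the two threshold verifications (for $p+c$: modulus unchanged, infimum $p_-+c$ large; for $(p+c)'$: modulus contracted by the Lipschitz constant $\sim(p_-+c-1)^{-2}$ of $t\mapsto t/(t-1)$) all check out, and crucially you identify correctly that the cancellation of $\log(1/|Q|)$ against the $1/\log$ decay of the modulus is exactly where $BMO^{1/\log}$ (rather than mere $BMO$) is used, and that the shift $c$ exists precisely to beat the John--Nirenberg threshold --- the same mechanism as in Lerner's proof. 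The details you defer are standard but should be written out in a full proof: the local John--Nirenberg inequality on $Q$ with parameter $\gamma(r,|Q|)$, and the elementary variance identity $s\bar r-1\le r_-^{-2}\cdot\frac{1}{|Q|}\int_Q(r-\bar r)^2\,dx$ needed to tame the prefactor $|Q|^{1-s\bar r}$, which you assert but do not derive.
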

\par Taking into account Theorem \ref{theorem_BMO_VMO_c} the following questions are naturally
interesting:
\par\textbf{Question 1.} Let $p(\cdot)\in \mathcal{B}\cap BMO^{1/\log}.$ Does this imply
$p(\cdot)-\alpha\in \mathcal{B}$ for any $\alpha<p_{-}-1$?
\par\textbf{Question 2.} Let $p(\cdot)\in \mathcal{B}\cap BMO^{1/\log}.$ Does this imply
$\alpha p(\cdot)\in \mathcal{B}$ for any $\alpha>1/p_{-}$?

\par It has been shown by Lerner \cite{Ler} that all of these
questions have a negative answer. Slightly modified Lerner's
examples can be found in monograph \cite{DHHR}.

\par Let $\theta(t):=\min\{\max\{0,t+1/2\},1\},$ so that $\theta$ is
Lipschitz function with constant $1$. Define on $(0;1)$ the function
\begin{equation}
\label{exponent_lerner_diening} p(x)=2+6\theta(\sin(\pi\log\log(1/x))).
\end{equation}
Using exponent $p(\cdot),$ we may construct  examples of exponents which yields counterexamples for questions 1 and 2 (see \cite{DHHR}, pp. 148).

\par Given a function $f\in L^{1}[0;1]$. Let define its $BLO$ modulus by
$$
\eta(f,r)=\sup\limits_{|Q|\leq
r}(f_{Q}-\mathop{\mbox{essinf}}\limits_{x\in Q}f(x)),\,\,0<r\leq1,
$$
where supremum is taken over all intervals $Q\subset[0;1].$ We say that $
f\in BLO^{1/\log}$ if $\eta(f,r)\leq C/\log(e+1/r)$.

\par Note that the function
$$
f(x)=\left\{\begin{array}{rcl}
\ln\ln(1/x) & \textrm{if} & x\in (0,e^{-1}];\\
0 & \textrm{if} & x\in(e^{-1},1],
\end{array}
\right.
$$
belongs to $BLO^{1/\log}$ (see \cite{KSZ}).  The function $f$ is a classical example of the function from $BMO^{1/\log}$ (see \cite{Sp}). From the well-known observation that a Lipschitz function preserves mean oscillations, it follows that the functions (\ref{exponent_lerner_1}) and (\ref{exponent_lerner_diening}) provide an examples of a discontinuous bounded functions from $BMO^{1/\log}$.

\par It is natural to ask whether Lipschitz function preserves bounded lower oscillation? In general the answer of this question is negative. For exponent (\ref{exponent_lerner_diening}) we have $p(x)=2+6\theta(\sin(\pi f(x))),$ where $\theta$ is  Lipschitz function with constant $1$ and  $f\in BLO^{1/\log}.$  Note that exponent $p(\cdot)$  does not belong to $BLO^{1/\log}$.

\par Indeed, for $k\in \mathbb{N}$  define $a_k=e^{-e^{2k+11/6}},\:$ $\:b_k=e^{-e^{2k+7/6}}\:$, $\:c_k=e^{-e^{2k+5/6}}$, $\:d_k=e^{-e^{2k+1/6}}$.  Note that $p(x)=2$ on $(a_k;\, b_k)$ and $p(x)=8$ on $(c_k;\,d_k).$ It is clear that $a_k<b_k<c_k<d_k.$ Let $Q_k=(0;\,d_k).$  We have
$$
\frac{1}{|Q_k|}\int_{Q_k}p(x)dx-p_-(Q_k)=\frac{1}{d_k}\int_0^{d_k}(p(x)-2)dx\geq
$$
$$
\geq\frac{1}{d_k}\int_{c_k}^{d_k}(p(x)-2)dx=
\frac{6(d_k-c_k)}{d_k}\to6,\:\:\:\:k\to+\infty.
$$
Therefore $p(\cdot)\notin BLO^{1/\log}$.  Using the same arguments we can prove that $1/p(\cdot)\notin BLO^{1/\log}$. We only need assume that $Q_k=(0;b_k)$ (analogously we can prove that same statement is true for exponent defined by (\ref{exponent_lerner_1})).

\par It is clear that $BLO^{1/\log}\subset BMO^{1/\log},$ therefore by theorem \ref{theorem_BMO_VMO_c}
for any exponent $p(\cdot),\,\,1/p(\cdot)\in BLO^{1/\log},$
$1<p_-\leq p_+<+\infty$ there exits constant $c>0$ such that $p(\cdot)+c\in\mathcal{B}$.

\par Taking into account the last statement the analogous of questions 1 and 2 are interesting:

\par  \textbf{Question 3.} Let for exponent $p(\cdot),\,1<p_{-}\leq p_{+}<\infty$ and
 for $c>0$ we have  $1/(p(\cdot)+c)\in BLO^{1/\log}$ and $p(\cdot)+c\in\mathcal{B}$.
Does this implies $p(\cdot)\in\mathcal{B}$?
\par \textbf{Question 4.} Let for exponent $p(\cdot),\,1<p_{-}\leq p_{+}<\infty$ and
for $c>1$ we have  $1/(cp(\cdot))\in BLO^{1/\log}$ and $cp(\cdot)\in\mathcal{B}$. Does
this implies $p(\cdot)\in\mathcal{B}$?
From Theorem \ref{theorem_chi_asymp} and questions 3 and 4, it follows a new question:
\par\textbf{Question 5.} Let $1<p_{-}\leq p_{+}<\infty$ and uniformly
for all intervals $Q\subset[0;1]$ we have one of the following asymptotic estimations
\begin{equation}
\|\chi_{Q}\|_{p(\cdot)}\asymp
|Q|^{\frac{1}{|Q|}\int_{Q}\frac{1}{p(x)}dx}\,\,\,\,\:\:\:\mbox{or}\:\:\:\:\,\,\,\|\chi_{Q}\|_{p'(\cdot)}\asymp |Q|^{\frac{1}{|Q|}\int_{Q}\frac{1}{p'(x)}dx}.
\end{equation}
Does this chosen estimation implies the second one asymptotic estimation?
\par We obtain a negative answer of this question.
\begin{thm}
\label{theorem_kontr_makenhaupt}
There exists exponent $p(\cdot)$ such that  $1/p(\cdot)\in
BLO^{1/\log},\,\,1<p_{-}\leq p_{+}<\infty$ and we have only one
uniformly asymptotic estimate \textnormal{(\ref{uniform})}.
\end{thm}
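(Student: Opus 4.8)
\textit{Plan.}
The plan is to rewrite both relations in (\ref{uniform}) as two-sided estimates for $\|\chi_Q\|$, and then to engineer an exponent for which the ``lower'' oscillation of $1/p$ is controlled (forcing the first relation) while the ``upper'' one is not (killing the second). Writing $s(x):=1/p(x)$ and $s_Q:=\frac{1}{|Q|}\int_Q s$, I would first record the universal bounds
\begin{equation*}
|Q|^{\,s_Q}\ \le\ \|\chi_Q\|_{p(\cdot)}\ \le\ |Q|^{\,1/p_+(Q)},
\end{equation*}
valid for every interval $Q\subset[0;1]$: the lower bound follows from Jensen's inequality applied to the convex modular $\lambda\mapsto\int_Q\lambda^{-p(x)}\,dx$ together with $1/p_Q\le s_Q$, and the upper bound by testing that modular at $\lambda=|Q|^{1/p_+(Q)}$. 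The same holds with $p$ replaced by $p'$. Consequently the first relation in (\ref{uniform}) is equivalent to the uniform boundedness of $|Q|^{\,1/p_+(Q)-s_Q}=|Q|^{-(s_Q-\operatorname*{essinf}_Q s)}$, and the second to the same quantity with $s_Q-\operatorname*{essinf}_Q s$ replaced by $\operatorname*{esssup}_Q s-s_Q$. (By Theorem \ref{theorem_chi_asymp} both together would force $p(\cdot)\in\mathcal{B}$.)

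From this the strategy is transparent. If $1/p(\cdot)\in BLO^{1/\log}$ then $s_Q-\operatorname*{essinf}_Q s\le C/\log(e+1/|Q|)$, so $|Q|^{-(s_Q-\operatorname*{essinf}_Q s)}$ stays bounded and the first relation in (\ref{uniform}) holds \emph{automatically}. The whole problem is thus reduced to constructing a bounded $s=1/p(\cdot)$, with $0<\inf s\le\sup s<1$, such that $s\in BLO^{1/\log}$ but the second relation fails, i.e.\ there is a sequence $Q_k$ with $|Q_k|\to0$ along which
\begin{equation*}
\Big(\operatorname*{esssup}_{Q_k}s-s_{Q_k}\Big)\,\log\frac{1}{|Q_k|}\ \longrightarrow\ \infty,
\end{equation*}
and along which, moreover, the set where $p(\cdot)$ is smallest carries enough mass to push $\|\chi_{Q_k}\|_{p'(\cdot)}$ up to its upper bound.

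For the construction I would pass to the variable $t=\log(1/x)$, in which Lebesgue measure becomes $e^{-t}\,dt$, and set $s(x)=a+b\,\psi\big(\log(1/x)\big)$, with $a,b$ chosen so that $s$ takes values in a fixed subinterval of $(0,1)$. The profile $\psi$ is nondecreasing, bounded, and assembled from sparse logarithmically graded increments: on blocks $t\in(t_k,\,t_k+w_k)$ with $t_k\asymp k^{3}$, the function $\psi$ rises by $\delta_k\asymp k^{-2}$ with slope $\asymp 1/t$, and is constant between consecutive blocks. Then $\sum_k\delta_k<\infty$ keeps $\psi$ bounded, while $\psi'(t)\le C/t$. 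Since the weight $e^{-t}$ concentrates every average at the left (large-$x$) endpoint, a Laplace-type computation shows that the lower oscillation $s_Q-\operatorname*{essinf}_Q s$ is of order $\psi'$ at the corresponding scale, hence at most $C/t$, i.e.\ comparable to $1/\log(1/|Q|)$, uniformly over all intervals. This yields $1/p(\cdot)\in BLO^{1/\log}$, and with it the first relation in (\ref{uniform}).

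To see that the second relation fails, take $Q_k$ to be the $x$-image of the $k$-th block; its upper oscillation is $\asymp\delta_k$, so $(\operatorname*{esssup}_{Q_k}s-s_{Q_k})\log(1/|Q_k|)\asymp\delta_k t_k\asymp k\to\infty$. I would then bound $\|\chi_{Q_k}\|_{p'(\cdot)}$ from below by restricting the $p'$-modular to the deep part $F_k$ of the block, where $p(\cdot)$ is near its infimum, giving $\|\chi_{Q_k}\|_{p'(\cdot)}\ge|F_k|^{1/p'_+(Q_k)}$; choosing the slope constant large (gentler rises, wider blocks) guarantees that the logarithmic loss of mass of $F_k$ relative to $Q_k$ is smaller than the gain $\delta_k t_k$, so that $\|\chi_{Q_k}\|_{p'(\cdot)}\big/|Q_k|^{\,1-s_{Q_k}}\to\infty$ and the second relation genuinely fails. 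The main obstacle is exactly this balance: large upper oscillation of $s$ on small intervals normally forces large lower oscillation as well, which would destroy membership in $BLO^{1/\log}$. The sparse, $1/t$-graded rises are what let the cumulative oscillation break the $p'$-estimate while the lower oscillation stays at most a constant times $1/\log(1/|Q|)$ at every scale. Verifying this last fact uniformly over \emph{all} intervals $Q$ — not merely those whose length is comparable to their distance from the origin — is the technical heart of the argument.
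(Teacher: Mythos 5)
Your strategy is correct, and it is a genuinely different route from the paper's. The paper takes $1/p(\cdot)$ to be a truncation (between $1/2$ and $63/64$) of a sawtooth-folded iterated logarithm $g$, imports from \cite{KSZ} both the fact $g\in BLO^{1/\log}$ and the implication ``$1/p(\cdot)\in BLO^{1/\log}$ implies the first estimate in (\ref{uniform})'', and then breaks the $p'$-estimate by a modular computation on $Q_k=(0;\beta_k)$: the mean of $1/p'$ over $Q_k$ stays above $1/4$ while the set $(a_k;b_k)$ on which $p'=64$ satisfies $(b_k-a_k)/\beta_k^{16}\to\infty$. You replace both imported ingredients by self-contained ones: the elementary two-sided bound $|Q|^{s_Q}\le\|\chi_Q\|_{p(\cdot)}\le|Q|^{\operatorname{essinf}_Q s}$ (your derivation is correct, and it does show that $s=1/p(\cdot)\in BLO^{1/\log}$ forces the first estimate, since then $|Q|^{-(s_Q-\operatorname{essinf}_Q s)}\le e^{C}$), and a monotone staircase $s=a+b\,\psi(\log(1/x))$ with sparse rises $\delta_k\asymp k^{-2}$ at scales $t_k\asymp k^{3}$ and slope at most $\sigma/t$. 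The failure mechanism is then the same in both proofs --- on $Q_k$ the quantity $(\operatorname{esssup}_{Q_k}s-s_{Q_k})\log(1/|Q_k|)$ blows up while the set where $s$ is nearly maximal keeps enough measure --- and the BLO verification you correctly flag as the technical heart does go through: monotonicity of $\psi$ together with $\psi'(t)\le\sigma/\max(t,1)$ yields $s_Q-\operatorname{essinf}_Q s\le C\sigma b/\log(e+1/|Q|)$ uniformly, by an elementary case split between short intervals ($|Q|\le\beta/2$ for $Q=(\alpha;\beta)$) and long ones. So your plan yields a complete and more self-contained proof, at the price of redoing what the paper simply cites.

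Three local errors to fix. (i) Your ``equivalence'' of the first relation in (\ref{uniform}) with boundedness of $|Q|^{-(s_Q-\operatorname{essinf}_Q s)}$ is only an implication: the upper bound $|Q|^{\operatorname{essinf}_Q s}$ need not be attained, so the converse fails in general. You use only the valid direction, and you rightly plan to disprove the $p'$-relation directly rather than through this equivalence, so nothing collapses --- but do not state it as an equivalence. (ii) The inequality $\|\chi_{Q_k}\|_{p'(\cdot)}\ge|F_k|^{1/p'_+(Q_k)}$ is false as written: since $1/p'_+(Q_k)=\operatorname{essinf}_{Q_k}(1/p')$, the quantity $|F_k|^{1/p'_+(Q_k)}$ is an upper-type bound. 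The correct elementary bound is $\|\chi_{Q_k}\|_{p'(\cdot)}\ge\|\chi_{F_k}\|_{p'(\cdot)}\ge|F_k|^{\operatorname{esssup}_{F_k}(1/p')}$, which costs an extra factor $|F_k|^{b\delta_k/2}\approx e^{-b\delta_k t_k/2}$; this halves the gain but the argument survives (alternatively, define $F_k$ as the set where $s\ge\operatorname{esssup}_{Q_k}s-\delta_k/100$). (iii) The parenthetical ``gentler rises, wider blocks'' is backwards. With slope $\sigma/t$ the block width is $w_k\asymp\delta_k t_k/\sigma$, the logarithmic loss of measure of $F_k$ inside $Q_k$ is $\asymp w_k$, and the gain is $\asymp b\,\delta_k t_k$; so you need the slope constant $\sigma$ \emph{large}, i.e.\ steeper rises and narrower blocks, roughly $\sigma>(1-a)/b$. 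This is harmless for membership in $BLO^{1/\log}$, which tolerates any fixed constant, but if one implements gentle rises and wide blocks, the loss swamps the gain and the construction fails.
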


An immediate consequence of Theorem \ref{theorem_kontr_makenhaupt} and Theorem \ref{theorem_chi_asymp} is following corollary.  

\begin{cor}
\label{corollary_BLO_c_neq_0} There exists exponent $p(\cdot)$ such
that  $1/p(\cdot)\in BLO^{1/\log},\,\,1<p_{-}\leq p_{+}<\infty$  but
$p(\cdot)\notin \mathcal{B}$.
\end{cor}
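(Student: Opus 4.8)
The plan is to construct the exponent $p(\cdot)$ explicitly by a dyadic-type construction on nested intervals accumulating at the origin, forcing exactly one of the two asymptotic estimates in (\ref{uniform}) to hold while the other fails. The guiding principle comes from Theorem \ref{theorem_chi_asymp}: membership in $\mathcal{B}$ is equivalent to \emph{both} asymptotic estimates holding uniformly, so to produce an exponent with only one estimate I would exploit the asymmetry between $p(\cdot)$ and $p'(\cdot)$. Concretely, I would take $p(\cdot)$ to oscillate between two values on a sequence of intervals $(a_k, b_k)$ shrinking toward $0$, with the oscillation pattern chosen so that $1/p(\cdot)$ has controlled \emph{lower} oscillation (to land in $BLO^{1/\log}$) but $1/p'(\cdot) = 1 - 1/p(\cdot)$ does \emph{not}, which is exactly the mechanism exhibited in the introduction's computation showing the Lerner--Diening exponent fails $BLO^{1/\log}$.

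First I would fix the two levels of the exponent and the geometric rate at which the intervals accumulate, in the spirit of the scales $a_k = e^{-e^{2k+11/6}}$, etc., used in the excerpt's counterexample. The key design choice is that the set where $p(\cdot)$ takes its larger value should be placed so that, when averaging $1/p$ over an initial interval $Q_k = (0, b_k)$, the average stays close to the \emph{essential infimum} of $1/p$ on $Q_k$, giving $\eta(1/p, |Q_k|) \lesssim 1/\log(e+1/|Q_k|)$, whereas the complementary function $1/p' = 1-1/p$ has its essential infimum governed by the \emph{other} level, producing a lower-oscillation bound that does not decay like $1/\log$. Second, I would verify the $BLO^{1/\log}$ bound for $1/p(\cdot)$ over all intervals $Q$, not merely the distinguished ones $Q_k$: this requires a case analysis according to whether $Q$ is contained in a single constancy region, straddles finitely many oscillation intervals, or reaches down to $0$, and in each case estimating $(1/p)_Q - \mathop{\mbox{essinf}}_Q (1/p)$ against $1/\log(e+1/|Q|)$.

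Third, having secured $1/p(\cdot) \in BLO^{1/\log}$, I would establish that exactly one of the two asymptotic estimates in (\ref{uniform}) holds. By Theorem \ref{theorem_BMO_VMO_c} and the inclusion $BLO^{1/\log}\subset BMO^{1/\log}$, the exponent is close enough to $\mathcal{B}$ that the norm asymptotics $\|\chi_Q\|_{p(\cdot)}\asymp |Q|^{\frac{1}{|Q|}\int_Q 1/p}$ should hold; to prove it I would use the standard relation between $\|\chi_Q\|_{p(\cdot)}$ and the modular, reducing the estimate to comparing $\frac{1}{|Q|}\int_Q |Q|^{-p(x)(\cdots)}\,dx$ with its geometric-mean surrogate, which the one-sided oscillation control of $1/p$ permits. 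To show the \emph{other} estimate fails, I would test it on the distinguished intervals $Q_k=(0,b_k)$ and show directly that $\|\chi_{Q_k}\|_{p'(\cdot)}$ and $|Q_k|^{\frac{1}{|Q_k|}\int_{Q_k} 1/p'}$ diverge in ratio as $k\to\infty$, by quantifying the gap between the arithmetic average of $1/p'$ and the value that actually controls the norm.

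The main obstacle I anticipate is the simultaneous calibration in the two previous paragraphs: the interval lengths, the two exponent levels, and the relative measures of the oscillation blocks must be tuned so that the \emph{same} geometry yields the $BLO^{1/\log}$ decay for $1/p$ and a uniform \emph{positive} discrepancy for the failing norm estimate. These pull in opposite directions — making the failing estimate fail robustly tends to enlarge oscillations, which threatens the $BLO^{1/\log}$ bound — so the quantitative heart of the argument is finding a scale regime where both hold. Once the parameters are fixed, the verification of (\ref{uniform}) for the surviving estimate and its failure for the other reduces to the modular computations sketched above, and the corollary follows immediately by contraposition of Theorem \ref{theorem_chi_asymp}.
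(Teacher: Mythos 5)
Your overall skeleton is the paper's: produce $p(\cdot)$ with $1/p(\cdot)\in BLO^{1/\log}$, $1<p_-\leq p_+<\infty$, for which the $p'(\cdot)$-estimate in (\ref{uniform}) fails along test intervals $Q_k=(0;b_k)$, and then conclude $p(\cdot)\notin\mathcal{B}$ from Theorem \ref{theorem_chi_asymp}. The genuine gap is in the construction itself. An exponent that literally ``oscillates between two values'' on a family of intervals --- a piecewise constant, two-valued $p(\cdot)$ with ``constancy regions,'' which is what your sketch and your proposed case analysis describe --- can \emph{never} satisfy $1/p(\cdot)\in BLO^{1/\log}$, no matter how the lengths, positions and relative measures of the blocks are tuned. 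Indeed, two adjacent constancy intervals carrying different values share a boundary point $x^*$; centering a short interval $Q$ at $x^*$ gives $(1/p)_Q-\mathop{\mbox{essinf}}_{Q}(1/p)=\frac{1}{2}\left|1/p_0-1/p_1\right|$, a constant, for arbitrarily small $|Q|$, so $\eta(1/p,r)$ cannot decay like $1/\log(e+1/r)$. (More generally, $BLO^{1/\log}\subset BMO^{1/\log}\subset VMO$, and a function essentially attaining exactly two values on sets of positive measure is never in $VMO$.) This is exactly why the paper's exponent (\ref{exponent_ksz}) is not two-valued: the plateaus $p=2$ and $p=64/63$ are joined by \emph{continuous transitions at log-log speed}, via the sawtooth $g$ built from $\ln\ln$ pieces and truncated at heights $1/2$ and $63/64$, and these slow transitions are the missing ingredient that makes the $BLO^{1/\log}$ bound possible at all. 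Note also that the scales $a_k=e^{-e^{2k+11/6}}$ you borrow come from the introduction's computation showing that exponent (\ref{exponent_lerner_diening}) \emph{fails} $BLO^{1/\log}$, because there both level sets occupy comparable fractions of the same scales; the paper's construction succeeds by making the plateaus where $p'=64$ exponentially small relative to the neighbouring scales, yet still large compared with $|Q_k|^{16}$, which is estimate (\ref{estim_b_k-a_k_beta_k}).

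Two further points. Your route to the surviving estimate $\|\chi_Q\|_{p(\cdot)}\asymp|Q|^{\frac{1}{|Q|}\int_Q\frac{1}{p(x)}dx}$ via Theorem \ref{theorem_BMO_VMO_c} is a dead end: that theorem only yields $p(\cdot)+c\in\mathcal{B}$ for some $c>0$, hence norm asymptotics for the shifted exponent, not for $p(\cdot)$ itself. What actually gives the estimate is the implication ``$1/p(\cdot)\in BLO^{1/\log}$ implies $\|\chi_Q\|_{p(\cdot)}\asymp|Q|^{(1/p)_Q}$'' (lower bound by Jensen's inequality, upper bound by the modular computation you sketch, in which the one-sided $L^\infty$ control of $(1/p)_Q-\mathop{\mbox{essinf}}_Q(1/p)$ is precisely what bounds the factor $|Q|^{-p(x)\left((1/p)_Q-1/p(x)\right)}$); this is the result of \cite{KSZ} that the paper invokes. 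Moreover, for the corollary this entire step is superfluous: by Theorem \ref{theorem_chi_asymp} it suffices that \emph{one} of the two estimates fails. Finally, in the failure step, observing that $1/p'=1-1/p$ has non-decaying lower oscillation is necessary but not sufficient to break the norm asymptotics for $p'(\cdot)$; what is needed is a quantitative measure bound of the type (\ref{estim_b_k-a_k_beta_k}), namely that the set inside $Q_k$ where $p'$ attains its maximal value is not small compared with $|Q_k|$ raised to the power $p'_+\cdot\frac{1}{|Q_k|}\int_{Q_k}\frac{1}{p'(x)}dx$, and this is where the calibration of levels and scales actually enters.
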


Using the same arguments as in \cite{DHHR} we obtain  negative answer of above questions 3 and 4.

\par In this paper we shall also consider the Hardy operator defined by
$$
Tf(x)=\frac{1}{x}\int_0^xf(t)dt.
$$
where $f\in L^{p(\cdot)}[0;1].$
\par We will prove following theorem.

\begin{thm}
\label{theorem_on_hady_oper} There exists exponent
$p(\cdot),\,1<p_{-}\leq p_{+}<\infty ,$ such that $1/p(\cdot)\in
BLO^{1/\log}$ but Hurdy's operator $T$ is not bounded in
$L^{p(\cdot)}[0;1]$.
\end{thm}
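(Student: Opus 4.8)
The plan is to re-use the exponent $p(\cdot)$ produced in the proof of Theorem \ref{theorem_kontr_makenhaupt}: since $BLO^{1/\log}\subset BMO^{1/\log}$ and that construction already supplies an exponent with $1/p(\cdot)\in BLO^{1/\log}$ and $1<p_-\le p_+<\infty$, it remains only to show that the Hardy operator $T$ is unbounded for such an exponent. The geometry forced by the $BLO^{1/\log}$ condition is that $1/p(\cdot)$ must stay, on the average over every interval $Q$, close to its essential infimum $1/p_+(Q)$; hence $p(\cdot)$ equals its maximal value $p_+$ on most of $[0;1]$ and dips down to the minimal value $p_-$ only on a sequence of thin intervals $E_k=(c_k,d_k)$ accumulating at the origin, with $\beta_k:=\abs{E_k}/d_k\to0$. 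These are the same dips responsible for the failure of one of the estimates in (\ref{uniform}), and I would use them to break $T$ as well.

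First I would take the indicators of the dips as test functions, $f_k=\chi_{E_k}$. Since $p\equiv p_-$ on $E_k$, the norm is exactly $\norm{f_k}_{p(\cdot)}=\abs{E_k}^{1/p_-}$. For $x>d_k$ the Hardy average is $Tf_k(x)=\frac1x\int_0^x\chi_{E_k}=\abs{E_k}/x$, so on a collar $(d_k,2d_k)$, which the construction keeps inside the region $\{p=p_+\}$, one has $Tf_k\ge\abs{E_k}/(2d_k)$ while $p\equiv p_+$. Restricting the norm to this collar and using $\norm{\chi_{(d_k,2d_k)}}_{p(\cdot)}=d_k^{1/p_+}$ gives
$$
\norm{Tf_k}_{p(\cdot)}\ge\frac{\abs{E_k}}{2d_k}\,d_k^{1/p_+}=\frac12\,\abs{E_k}\,d_k^{1/p_+-1}.
$$
Dividing by $\norm{f_k}_{p(\cdot)}$ then yields
$$
\frac{\norm{Tf_k}_{p(\cdot)}}{\norm{f_k}_{p(\cdot)}}\ge\frac12\,\abs{E_k}^{\,1-1/p_-}d_k^{1/p_+-1}=\frac12\,\beta_k^{\,1-1/p_-}\,d_k^{1/p_+-1/p_-}.
$$
Because $1/p_+-1/p_-<0$, the factor $d_k^{1/p_+-1/p_-}=(1/d_k)^{1/p_--1/p_+}$ is a fixed positive power of $1/d_k$, whereas the $BLO^{1/\log}$ calibration $\beta_k\asymp1/\log(1/d_k)$ makes $\beta_k^{\,1-1/p_-}$ decay only like a negative power of $\log(1/d_k)$; the former outpaces the latter, so the ratio tends to $+\infty$ and $T$ is unbounded on $L^{p(\cdot)}[0;1]$. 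The mechanism is transparent: concentrating mass on the thin low-exponent dip makes $\norm{f_k}_{p(\cdot)}=\abs{E_k}^{1/p_-}$ unusually \emph{small}, while $Tf_k$ spreads that mass into the $p_+$-collar where a substantial portion of it survives.

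I expect the main obstacle to be not this computation but its interface with the construction. One must simultaneously guarantee that (i) $1/p(\cdot)\in BLO^{1/\log}$ holds for \emph{every} interval, not only those anchored at $0$, which forces the total mass of all dips inside any $(0,d)$ to be $\lesssim d/\log(1/d)$ and thereby constrains both the widths $\abs{E_k}$ and the spacing of the $d_k$, and (ii) each dip is flanked on its right by a genuine $p_+$-collar, so that the clean lower bound $\norm{\chi_{(d_k,2d_k)}}_{p(\cdot)}=d_k^{1/p_+}$ is legitimately available. The delicate balance is to keep $\beta_k$ as large as the $BLO^{1/\log}$ bound permits, so that its decay is merely logarithmic and cannot cancel the polynomial-in-$1/d_k$ blow-up above; this is precisely the balance already achieved in Theorem \ref{theorem_kontr_makenhaupt}. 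Once that exponent is in hand, the test functions $\chi_{E_k}$ complete the argument, and the estimate is in fact robust: it survives as long as $\beta_k$ is not polynomially small in $d_k$, which the logarithmic saturation comfortably ensures.
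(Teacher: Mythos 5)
Your route is genuinely different from the paper's: the paper never tests $T$ on explicit functions, but instead invokes a necessary condition from \cite{K1} (boundedness of $T$ on $L^{p(\cdot)}[0;1]$ forces $\sup_{0<s\leq1}\|\chi_{[0;s]}\|_{p'(\cdot)}\cdot\|x^{-1}\chi_{[s;1]}\|_{p(\cdot)}<\infty$) and violates it at $s=\beta_k$ by pairing the dip $(a_k,b_k)$, where $p'=64$, with the plateau $(\alpha_{k-1},\beta_{k-1})$, where $p=2$, lying to the right of $\beta_k$. You instead apply $T$ directly to $f_k=\chi_{(a_k,b_k)}$ and capture the mass on a collar just right of $b_k$; this is self-contained (no appeal to \cite{K1}) and the mechanism is sound. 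One small repair: the collar $(b_k,2b_k)$ is \emph{not} entirely inside $\{p=p_+\}$, since $(b_k,\alpha_k)$ is a transition zone where $64/63<p<2$; but $\alpha_k-b_k=e^{-e^{k+1/2}}-e^{-e^{k+63/64}}\ll b_k$, so replacing the collar by $(\alpha_k,2b_k)\subset A$ costs nothing.

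The genuine gap is in your justification of the final blow-up. You assert that the construction calibrates the relative width of the dips logarithmically, $(b_k-a_k)/b_k\asymp 1/\log(1/b_k)$, claiming BLO forces this and the construction saturates it. Neither is true: BLO gives only an upper bound, and the paper's exponent is nowhere near saturating it. In fact
$$
\frac{b_k-a_k}{b_k}\approx\frac{2e^{-e^{k+63/64}}}{e^{-e^{k}}}=2e^{-(e^{63/64}-1)e^{k}}\approx 2\,b_k^{\,e^{63/64}-1}\approx 2\,b_k^{1.68},
$$
which is \emph{polynomially} small in $b_k$ --- exactly the regime you yourself declare fatal (``it survives as long as $\beta_k$ is not polynomially small in $d_k$''). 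So by your own stated criterion, the exponent you are reusing would break your proof. The argument is nevertheless salvageable, because your inequality only requires, writing $\rho_k=(b_k-a_k)/b_k$, that $\rho_k^{1-1/p_-}\,b_k^{1/p_+-1/p_-}\to\infty$, i.e.\ $\rho_k^{1/64}\gg b_k^{31/64}$, i.e.\ $\rho_k\gg b_k^{31}$; since $\rho_k\approx 2b_k^{1.68}$ and $1.68<31$, the ratio indeed blows up, at rate $e^{(1/2-e^{63/64}/64)e^{k}}$. Thus your test-function method does prove the theorem, but the key limit must be verified with the actual parameters of Theorem \ref{theorem_kontr_makenhaupt}; as written, the step that drives the whole proof rests on a false description of the construction, and your robustness claim is wrong (the correct threshold is not ``no polynomial smallness'' but $\rho_k\gg b_k^{31}$, a consequence of the specific values $p_-=64/63$, $p_+=2$).
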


\section{Proof of results}

\begin{proof}[Proof of Theorem \ref{theorem_kontr_makenhaupt}]
\par Let  $d_n=e^{-e^n},\,\,n\in\{0\}\cup\mathbb{N}$ and
$c_0=2/e$, $c_{2n+1}=c_{2n}-(d_n-d_{n+1})$, $c_{2n+2}=c_{2n}-2(d_n-d_{n+1})$,  $n\in\{0\}\cup\mathbb{N}$. Let
$$
g(x)=\left\{\begin{array}{rcl}
\ln\ln\frac{1}{c_{2n}+c_{2n+2}-x-d_n}-n & \textrm{if} & x\in (c_{2n+2};\,c_{2n+1}], n\in\{0\}\cup\mathbb{N}\vspace{1ex};\\
\ln\ln\frac{1}{x-d_n}-n & \textrm{if} & x\in(c_{2n+1};\,c_{2n}], n\in\{0\}\cup\mathbb{N}\vspace{1ex};\\
0 & \textrm{if} & x\in(2/e,1].
\end{array}
\right.
$$
Note that function $g$ is  bounded  function (some sense analogous
of $\sin(f(x))$) which belongs to $BLO^{1/\log}$ (see \cite{KSZ}).

Let denote
$$
A:=\{x:\,\,g(x)<1/2\}=\cup_{k=0}^{\infty}(\alpha_k;\,\beta_{k})
$$
and
$$
B:=\{x:\,\,g(x)>63/64\}=\cup_{k=0}^{\infty}(a_k;\,b_{k}),
$$
where the numbers $a_k,\,b_k,\,\alpha_k,\,\beta_k$ are defined as $\alpha_0=e^{-1}+e^{-e^{1/2}}$, $\:\:\beta_0=1,\:\:$ $\alpha_k=e^{-e^{k}}+e^{-e^{k+1/2}},\:\:$ $\beta_k=e^{-e^{k-1}}+2e^{-e^{k}}-e^{-e^{k-1/2}},\:\:$ $k\in\mathbb{N}$ and
$a_k=e^{-e^{k}}+2e^{-e^{k+1}}-e^{-e^{k+63/64}},\:\:$ $b_k=e^{-e^{k}}+e^{-e^{k+63/64}},\:\;$ $k\in\{0\}\cup\mathbb{N}.
$

It is clear that,
$\beta_{k+1}<a_k<c_{2k+1}<b_k<\alpha_k<c_{2k}<\beta_k$ for
$k\in\{0\}\cup\mathbb{N}.$ We have
\begin{equation}
\label{estim_beta_k_alpha_k}
\frac{\beta_{k}}{\alpha_{k}}>\frac{\beta_{k}}{c_{2k}}=\frac{2e^{-e^{k}}+
e^{-e^{k-1}}-e^{-e^{k-1/2}}}{2e^{-e^{k}}}=1+e^{e^{k}(1-e^{-1})}/2-e^{e^{k}(1-e^{-1/2})}/2,
\end{equation}
and consequently $\alpha_{k}/\beta_{k}\rightarrow 0$, when
$k\rightarrow\infty.$
\par We also have
\begin{equation}
\label{estim_b_k-a_k_beta_k}
\frac{b_k-a_k}{\beta_k^{16}}=\frac{e^{-e^{k}}+e^{-e^{k+63/64}}-e^{-e^{k}}-2e^{-e^{k+1}}+e^{-e^{k+63/64}}}{\beta_k^{16}}=
\end{equation}
$$
=\frac{2e^{-e^{k+63/64}}-2e^{-e^{k+1}}}{\beta_k^{16}}\geq\frac{2e^{-e^{k+63/64}}-2e^{-e^{k+1}}}{\left(4e^{-e^{k-1}}\right)^{16}}
\simeq\frac{2e^{-e^{k+63/64}}}{4^{16}\left(e^{-e^{k-1}}\right)^{16}}\geq
$$
$$
\geq e^{-e^{k+1}}\cdot4^{-16}\left(e^{-e^{k-1}}\right)^{-2e^2}=e^{e^{k+1}}4^{-16}\to+\infty,\:\:\:
k\to+\infty.
$$

\par Let now construct exponent $p(\cdot)$ in following way

\begin{equation}
\label{exponent_ksz}
p(x)=\left\{\begin{array}{rcl}
2 & \textrm{if} & x\in A \vspace{1ex};\\
64/63 & \textrm{if} & x\in B\vspace{1ex};\\
1/g(x) & \textrm{if} &
x\in[0;1]\backslash(A\cup B).\\
\end{array}
\right.
\end{equation}

 Since $g\in BLO^{1/\log}$ (see \cite{KSZ}), then $1/p(\cdot)\in BLO^{1/\log}$. Consequently we have uniformly asymptotic estimation (\ref{uniform}) for  norms $\|\chi_{Q}\|_{p(\cdot)}$ (see
 \cite{KSZ}).

\par We will prove that asymptotic estimation
\begin{equation}
\label{asimtotika}
||\chi_Q||_{p'(\cdot)}\asymp|Q|^{\frac{1}{|Q|}\int_Q\frac{1}{p'(x)}dx}
\end{equation}
is not valid.
 It is clear that
$$
p'(x)=\left\{\begin{array}{rcl}
2 & \textrm{if} & x\in A \vspace{1ex};\\
64 & \textrm{if} & x\in B\vspace{1ex};\\
p(x)/(p(x)-1) & \textrm{if} &
x\in[0;1]\backslash(A\cup B).\\
\end{array}
\right.
$$
\par Let $Q_k=(0;\beta_k).$ By (\ref{estim_beta_k_alpha_k}) we have
$$
\frac{1}{|Q_k|}\int_{Q_k}\frac{1}{p'(x)}dx\geq
\frac{1}{\beta_k}\int_{\alpha_k}^{\beta_k}\frac{1}{p'(x)}dx=
\frac{\beta_k-\alpha_k}{2\beta_k}\rightarrow\frac{1}{2},\:\:\:k\to+\infty.
$$
Therefore there exists $k_0$ such that
$$
\frac{1}{|Q_k|}\int_{Q_k}\frac{1}{p'(x)}dx\geq\frac{1}{4}, \qquad k\geq k_0.
$$
Now consider $\lambda>1.$ For  $k\geq k_0$ by (\ref{estim_b_k-a_k_beta_k}) we have
$$
\int_{Q_k}\left(\lambda |Q_k|^{\frac{1}{|Q_k|}\int_{Q_k}\frac{1}{p'(t)}dt}\right)^{-p'(x)}dx\geq\int_{Q_k}
\left(\lambda |Q_k|^{\frac{1}{4}}\right)^{-p'(x)}dx\geq
$$
$$
\geq\int_{a_k}^{b_k}\left(\lambda |Q_k|^{\frac{1}{4}}\right)^{-p'(x)}dx=\frac{b_k-a_k}{\lambda^{64}\beta_k^{16}}\rightarrow\infty,\,\,k\rightarrow\infty.
$$
As a consequence, we obtain that asymptotic estimation (\ref{asimtotika}) is not valid.
\end{proof}

\begin{proof}[Proof of Corollary \ref{corollary_BLO_c_neq_0}]
Consider exponent $p(\cdot)$ constructed in Theorem \ref{theorem_kontr_makenhaupt} then by Theorem \ref{theorem_chi_asymp} we get desired result.
\end{proof}

\begin{proof}[Proof of Theorem \ref{theorem_on_hady_oper}]
Consider exponent $p(\cdot)$ constructed in Theorem \ref{theorem_kontr_makenhaupt}.  We will
prove that Hardy's operator is not bounded in $L^{p(\cdot)}[0;1]$.
 Note that the condition
$$
\sup\limits_{0<s\leq1}||\chi_{[0;s]}(x)||_{p'(\cdot)}\cdot\left\|x^{-1}\chi_{[s;1]}(x)\right\|_{p(\cdot)}<+\infty
$$
is necessary for boundedness of Hardy's operator in
$L^{p(\cdot)}[0;1]$ (see \cite{K1}).  Let us check that in our case  this condition fails. 
\par We have
$$
\int_0^{\beta_k}\frac{1}{\lambda^{p'(x)}}dx\geq \int_{a_k}^{b_k}\frac{1}{\lambda^{p'(x)}}dx=
\int_{a_k}^{b_k}\frac{1}{\lambda^{64}}dx=\frac{b_k-a_k}{\lambda^{64}}.
$$
From the last estimation and definition of the norm in variable exponent Lebesgue space we conclude that
$$
||\chi_{(0;\beta_k]}||_{p'(\cdot)}\geq\sqrt[64]{b_k-a_k}.
$$

We also have
$$
\int_{\beta_k}^1\left(\frac{1}{x\lambda}\right)^{p(x)}dx\geq\int_{\alpha_{k-1}}^{\beta_{k-1}}\left(\frac{1}{x\lambda}\right)^{p(x)}dx=
\int_{\alpha_{k-1}}^{\beta_{k-1}}\left(\frac{1}{x\lambda}\right)^2dx=
$$
$$
=\frac{1}{\lambda^2}\int_{\alpha_{k-1}}^{\beta_{k-1}}\frac{1}{x^2}dx=\frac{1}{\lambda^2}\left(\frac{1}{\alpha_{k-1}}-\frac{1}{\beta_{k-1}}\right).
$$
By the last estimation we conclude that
$$
\left\|x^{-1}\chi_{[\beta_k;1)}(x)\right\|_{p(\cdot)}\geq
\sqrt{1/\alpha_{k-1}-1/\beta_{k-1}}.
$$
Finally we get
$$
||\chi_{(0;\beta_k]}(x)||_{p'(\cdot)}\cdot\left\|x^{-1}\chi_{[\beta_k;1)}(x)\right\|_{p(\cdot)}\geq\sqrt[64]{b_k-a_k}\cdot\sqrt{1/\alpha_{k-1}-1/\beta_{k-1}}\geq
$$
$$
\geq\sqrt[64]{\frac{2}{e^{e^{k+63/64}}}}\sqrt[64]{1-\frac{e^{e^{k+63/64}}}{e^{e^{k+1}}}}\cdot\sqrt{\frac{1}{\alpha_{k-1}}}\cdot\sqrt{1-
\frac{\alpha_{k-1}}{\beta_{k-1}}}\simeq
$$
$$
\simeq\sqrt[64]{\frac{2\left(e^{e^{k-1}}\right)^{32}}{e^{e^{k+63/64}}}}\geq\sqrt[64]{\frac{\left(e^{e^{k-1}}\right)^{3e^2}}{e^{e^{k+1}}}}=\sqrt[32]{e^{e^{k+1}}}\to+\infty,\qquad k\to+\infty.
$$
\end{proof}

% ----------------------------------------------------------------

\end{document}